\newcommand{\tN}{\widetilde{N}}
\newcommand{\Ri}{\widetilde{R}_i}
\newcommand{\hs}{\hskip.02in}
\newcommand{\wtg}{\widetilde{g}}
\newcommand{\Ud}{U_{\delta}}
\newcommand{\di}{dV_{\widetilde{g}_i}}
\newcommand{\wtd}{\widetilde{g_{\delta}}}
\newcommand{\wti}{\widetilde{g}_i}
\newtheorem{theorem}{Theorem}[section]
\newtheorem{proposition}[theorem]{Proposition}
\newtheorem{corollary}[theorem]{Corollary}
\theoremstyle{definition}
\theoremstyle{remark}
\newtheorem{remark}[theorem]{Remark}
\numberwithin{equation}{section}
\title{Poincar\'e-Einstein metrics and Yamabe invariants}
\author{Matthew J. Gursky}
\address{Department of Mathematics
         University of Notre Dame\\
         Notre Dame, IN 46556}
\email{\href{mgursky@nd.edu}{mgursky@nd.edu}}
\author{Qing Han}
\address{Department of Mathematics\\
         University of Notre Dame\\
         Notre Dame, IN 46556}
\email{\href{qhan@nd.edu}{qhan@nd.edu}}
\address{Beijing International Center for Mathematical Research\\
         Peking University\\
         Beijing, 100871, China}
\email{\href{qhan@math.pku.edu.cn}{qhan@math.pku.edu.cn}}
\begin{document}

\date{\today}

\begin{abstract}
In this note we prove the existence of infinitely many positive conformal
classes on $S^7$ which cannot be the conformal infinity of a Poincar\'e-Einstein metric on the ball $B^8$.  We also
prove a sharp inequality between the Yamabe invariant of the conformal infinity and the Yamabe invariant of the interior
(after a suitable compactification).
\end{abstract}

\thanks{The first author acknowledges the support of NSF grant DMS-1509633.  The second author acknowledges the support of NSF
Grant DMS-1404596.  }

\maketitle


\section{Introduction} \label{Intro}

In this paper, we assume $(X, g_{+})$ is an $n$-dimensional Poincar\'e-Einstein manifold.  More precisely, $X$ is the interior of a compact manifold $\overline{X}$ with
boundary $\partial X = M$, and there is a defining function $\rho \in C^{\infty}(X)$ with $\rho > 0$ and $d\rho \neq 0$ on $\partial X$, and $\rho^2 g_{+}$ extends to a metric
$\overline{g}$ on the compact manifold with boundary $(\overline{X},\partial X)$.  Also, the metric $g_{+}$ satisfies the Einstein condition with negative Einstein constant, which we normalize so that
\begin{align} \label{PEdef}
Ric_{g_{+}} = - (n-1) g_{+}.
\end{align}
We will assume throughout that the compactified metric $\overline{g}$ is at least $C^2$ up to the boundary.  This compactification
defines a conformal class of metrics on the boundary $[ \gamma ]$, where $\gamma = \overline{g} \big|_{M}$ is called the {\em conformal infinity} of $(X,g_{+})$. The basic example is the Poincar\'e model for hyperbolic space on the unit ball $B^n \subset \mathbb{R}^n$.  In this case the conformal infinity is the standard conformal structure on the round sphere $S^{n-1}$.

Conversely, given a conformal class of metrics on the boundary $M = \partial X$ one can ask whether the interior admits a Poincar\'e-Einstein metric whose conformal infinity is the given conformal class.  Although there is no general existence theory for this problem, a seminal result was proved by Graham-Lee \cite{GL}: Given a metric $\gamma$ sufficiently close to the round metric $\gamma_0$ on the sphere $S^{n-1}$, there is a Poincar\'e-Einstein metric $g_{+}$ on the ball $B^{n}$ whose conformal infinity is $[\gamma]$. In \cite{WittenHolo}, Witten remarks that ``...one might ask what is the significance of the fact that the
Graham-Lee theorem presumably fails for conformal structures that are sufficiently far from the round one'' (see page 263).  Although Witten's interest is mainly in a Yang-Mills analog, he points out the consequences of the fact that restricting to small neighborhood of the round metric implies that the scalar curvature is positive.  He also remarks that the Graham-Lee result is likely optimal; i.e., ``the restriction to conformal structures that are sufficiently close to the standard one is probably also necessary for most values of (the dimension)'' and suggests a possible approach to finding a counterexample by ``...using a family of $S^d$'s that cannot be extended to a family of $\overline{B}_{d+1}$'s'' (see the footnote at the bottom of page 260 in \cite{WittenHolo} for more details in the Yang-Mills context).

Our main result in this note to prove the existence of infinitely many conformal classes on the seven-dimensional sphere $S^7$ which cannot be the conformal infinity of a Poincar\'e-Einstein metric on the ball $B^8$, thus confirming Witten's intuition in this dimension.  We rely on the construction of Gromov-Lawson \cite{GL}, which they used to prove that the space $\mathfrak{R}^{+}(S^7)$ of positive scalar curvature metrics on $S^7$ has infinitely many connected components (see Section \ref{CorSec} for a summary).   The precise statement is:

\begin{theorem} \label{MainIntro} There are infinitely many components of $\mathfrak{R}^{+}(S^7)$ containing metrics whose conformal class cannot be the conformal infinity of a Poincar\'e-Einstein metric on the eight-dimensional ball $B^8$.
\end{theorem}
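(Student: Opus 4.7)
The plan is to combine two ingredients: the sharp Yamabe inequality announced in the abstract (to be established later in the paper), and the Gromov-Lawson construction of infinitely many components of $\mathfrak{R}^{+}(S^{7})$.

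First, I would apply the Yamabe inequality to an arbitrary positive conformal infinity $[\gamma]$ of a Poincar\'e-Einstein metric on $B^{8}$. Since $Y(S^{7},[\gamma])>0$, the inequality produces a compactification $\overline{g}$ whose Yamabe invariant is also positive, and from this (together with the appropriate boundary-Yamabe analysis) one extracts a metric $g^{\star}$ on $\overline{B^{8}}$ with $R_{g^{\star}}>0$ whose boundary restriction is a PSC representative of $[\gamma]$. In short, every positive PE conformal infinity on $B^{8}$ is realized by a PSC metric on $S^{7}$ which admits a PSC extension to the standard ball.

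Second, I would invoke the Gromov-Lawson construction recalled in Section~\ref{CorSec}. It furnishes PSC metrics $\{h_{i}\}_{i\geq 1}$ on $S^{7}$ lying in infinitely many distinct components of $\mathfrak{R}^{+}(S^{7})$, distinguished by an $\alpha$-type index invariant. For infinitely many $i$, this invariant obstructs \emph{any} PSC extension of $h_{i}$ to $\overline{B^{8}}$: such an extension, glued to a suitable Gromov-Lawson null-bordism of $S^{7}$, would produce a closed spin $8$-manifold carrying a PSC metric and yet having nonzero $\hat{A}$-genus, contradicting Lichnerowicz.

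The bridge between the two steps is the observation that PSC-extendability to $\overline{B^{8}}$ depends only on the conformal class on $S^{7}$. This rests on two elementary facts: (i) two PSC representatives of a fixed conformal class on $S^{7}$ lie in a common path-component of $\mathfrak{R}^{+}(S^{7})$, because the set of conformal factors $u>0$ for which $u^{4/5}\gamma$ has positive scalar curvature is convex (the conformal Laplacian being linear); and (ii) PSC-extendability is constant along path-components of $\mathfrak{R}^{+}(S^{7})$, via the standard trick of grafting a path of boundary PSC metrics onto a collar. Hence any PSC representative of $[h_{i}]$ lies in the component of $h_{i}$, so if some representative extends then $h_{i}$ itself extends. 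Combining: if $[h_{i}]$ with nontrivial Gromov-Lawson invariant were the conformal infinity of a PE metric on $B^{8}$, the first step would produce a PSC extension of some PSC representative of $[h_{i}]$, which the bridge step would promote to an extension of $h_{i}$, contradicting the second step. This applies to infinitely many $i$, yielding the theorem. The principal obstacle is the sharp Yamabe inequality of the first step --- in particular arranging that the compactification restrict to a PSC metric in the prescribed conformal class on the boundary --- which I expect to constitute the technical heart of the paper.
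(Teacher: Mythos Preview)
Your overall architecture---transfer positivity from the conformal infinity to the compactified ball via the Yamabe inequality, then collide with the Gromov--Lawson $\widehat{A}$-obstruction---matches the paper. The gap is in your first step. You posit a metric $g^\star$ on $\overline{B^8}$ with \emph{both} $R_{g^\star}>0$ pointwise \emph{and} a PSC boundary restriction. The Yamabe compactification $\overline g$ of Proposition~\ref{YPprop} delivers the first (constant $R_{\overline g}>0$) together with totally geodesic boundary, but the Gauss equation $R_\gamma = R_{\overline g} - 2\,Ric_{\overline g}(N,N)$ gives no sign for $R_\gamma$; indeed the identity~(\ref{s55}) only bounds $\oint R_\gamma$ from \emph{above}. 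Conformally correcting the boundary to a PSC representative can destroy pointwise PSC in the interior, so the existence of $g^\star$ is unsupported. Your bridge step (ii) also tacitly needs the extension to be a product near the boundary, which the Yamabe compactification is not.

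The paper circumvents this by never seeking pointwise PSC on $B^8$ with PSC boundary. It conformally modifies $\overline g$ near $\partial B^8$ so that the induced metric equals the \emph{specific} Gromov--Lawson metric $\eta_0$ (not merely some PSC representative of its class), keeps the boundary totally geodesic, and then deforms to a product in a thin collar $U_\delta$. The resulting metric $\widetilde g_\delta$ may well have indefinite scalar curvature, but it glues smoothly to the PSC metric $\eta$ on the disk bundle $Y^8=D(\mathcal{E})$, yielding a smooth metric on the closed spin manifold $N^8=Y^8\cup B^8$. The analytic heart is an eigenvalue estimate: the first eigenvalue of the conformal Laplacian on $(N^8,\widetilde g_\delta)$ is positive for $\delta$ small. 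On $Y^8$ one uses $R_\eta>0$ directly; on $B^8$ outside the collar, conformal covariance transfers the quadratic form back to $\overline g$, where $R_{\overline g}>0$; the collar has volume $O(\delta)$ and uniformly bounded curvature, so its contribution is $O(\delta)$. A positive first eigenvalue means $N^8$ admits a conformal PSC metric, contradicting $\widehat{A}(N^8)\neq 0$. Thus your path-component bridge is replaced by a direct spectral argument on the glued manifold; this, and not arranging a PSC boundary for the compactification, is what actually makes the proof go through.
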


The construction of Gromov-Lawson can be extended to dimensions $4k -1$, for all $k \geq 2$, and thus we expect Theorem \ref{MainIntro} to hold in these cases as well (see the Remark at the end of Section \ref{CorSec}).  However, we only provide a detailed proof for dimension seven.

Our proof also relies on the fact, first observed by J. Qing \cite{JQ}, that if the Yamabe invariant of the conformal infinity of a Poincar\'e-Einstein manifold is positive, then the Yamabe invariant (suitably defined) of the compactified manifold is positive (see Section \ref{CorSec}).   In Section \ref{SecMainThm} we give a different proof of this result by appealing to the work of Escobar \cite{EscobarJDG} and Brendle-Chen \cite{BC} on the Yamabe problem for manifolds with boundary.  Using the Yamabe metric, we prove a sharp inequality between the Yamabe invariant of the boundary and the Yamabe invariant of the compactified manifold with boundary.  An asymptotic expansion for solutions of singular Yamabe equations plays a crucial role in the derivation of this inequality.

A brief note about the organization and conventions of this paper.  In Section \ref{SecYM} we define the version of the Yamabe problem on manifolds with boundary we will use, and give a summary of the known existence results.  In Section \ref{SecMainThm} we prove the aforementioned inequality for Yamabe invariants, Theorem \ref{MainThm1}.  In the final section we prove Theorem \ref{MainIntro}.

Since we will be using various facts about the Yamabe problem, our notational convention will differ from most papers on the subject of Poincar\'e-Einstein manifolds: that is, our P-E manifold will be $n$-dimensional, $n \geq 3$, and the boundary will have dimension $n-1$.

\smallskip

\noindent {\bf Acknowledgements.}  It is a pleasure to thank Stephan Stolz, who was an invaluable resource for references on the topology of manifolds with PSC, in particular the construction of \cite{GL}.  We would also like to thank Robin Graham, who pointed out Witten's suggestion for constructing counterexamples in \cite{WittenHolo} after we had sent him a preliminary version of the paper.

\section{A compactification of Poincar\'e-Einstien metrics via the Yamabe problem}  \label{SecYM}

There are various constructions of defining functions which appear in the literature of P-E metrics, and they can be viewed a kind of `gauge choice'.  In the following, we want to compactify $(X,g_{+})$ to obtain a metric of
constant scalar curvature such that the boundary $\partial X = M$ is minimal. The existence of such a compactification is equivalent to solving (one version of) the {\em boundary Yamabe problem}, and was first studied by Escobar in \cite{EscobarJDG}, and subsequently by Brendle-Chen \cite{BC}.   We now provide a brief summary of these results.

Given a compact Riemannian manifold with boundary $(X, \partial X, \overline{g})$, define the functional 
\begin{align*} 
\mathcal{Y}[u] = \dfrac{ \int_{X} \big( \frac{4(n-1)}{(n-2)} |\nabla_{\overline{g}} u|^2 + R_{\overline{g}} u^2 \big)dV_{\overline{g}} + 2 (n-1) \oint_{\partial X} H_{\overline{g}} u^2 dS_{\overline{g}} }{\Big( \int_{X} u^{2n/(n-2)} dV_{\overline{g}} \Big)^{(n-2)/n}},
\end{align*}
where $R_{\overline{g}}$ is the scalar curvature and $H_{\overline{g}}$ the mean curvature of the boundary with respect to $\overline{g}$.  Let
\begin{align} \label{YXMdef}
Y(X,\partial X, [\overline{g}]) = \inf_{\substack{u \in C^{\infty}(\overline{X}) \\ u > 0}} \mathcal{Y}[u].   
\end{align}
In \cite{EscobarJDG}, Escobar showed that
\begin{align} \label{Aubin}
Y(X,\partial X,[\overline{g}]) \leq Y(S^n_{+}, \partial S^n_{+}, [g_0]) = n(n-1)(\frac{1}{2}\omega_n)^{2/n},
\end{align}
where $S^n_{+}$ is the upper hemisphere, $g_0$ is the round metric, and $\omega_n$ is the volume of $S^n$.  Moreover, when the inequality is strict then $Y(X,\partial X, [\overline{g}])$ is attained by a smooth function $u > 0$ which defines a conformal metric $g_Y = u^{\frac{4}{n-2}}\overline{g}$ with constant scalar curvature and minimal boundary:
\begin{align} \label{bdyYamabe} \begin{split}
R_{g_Y} &= Y(X,\partial X,[\overline{g}])\cdot  Vol(X,g_Y)^{-2/n} \ \ in \ X, \\
H_{g_Y} &= 0\ on \ \ \partial X.
\end{split}
\end{align}

Escobar was able to verify that the inequality in (\ref{Aubin}) was strict if $(X,\partial X, \overline{g})$ is not conformally equivalent to the hemisphere and the dimension $3 \leq n \leq 5$, or in dimensions $n \geq 6$ if $\partial X$ is not umbilic. In \cite{BC}, Brendle-Chen considered the remaining cases; i.e., $n \geq 6$ umbilic boundary.  Their work is particularly relevant to our setting since the Einstein condition of $g_{+}$ implies that its compactification $(\overline{X},\partial X, \overline{g})$ has totally umbilic boundary (see \cite{AndersonAM}, p. 210).

Brendle-Chen were able to verify the remaining cases subject to the validity of the Positive Mass Theorem (PMT).   More specifically, they considered the conformal metric defined by $h = G^{4/(n-2)}\overline{g}$, where $G > 0$ is the Green's function for the conformal laplacian with Neumann boundary conditions and pole $p \in \partial X$.  If $\widetilde{X} = X \cup_{\partial X} (- X)$ denotes the double of $X$, then $h$ can be extended to a metric $\widetilde{h}$ on $\widetilde{X} \setminus \{ p\}$ so that $(\widetilde{X} \setminus \{ p\}, \widetilde{h})$ becomes an asymptotically flat manifold with zero scalar curvature (see \cite{BC}, Proposition 4.3).  If $X$ is spin, it follows from Witten \cite{Witten} that the PMT is valid for $(\widetilde{X} \setminus \{ p\}, \widetilde{h})$, and the argument of Brendle-Chen shows that the inequality in (\ref{Aubin}) is strict (see the Appendix of \cite{EscobarJDG} for more details on reducing the PMT in the boundary case to the classical case by considering the double of the manifold).   Summarizing:

\begin{theorem} (See \cite{EscobarJDG}, \cite{BC}, \cite{Witten}) \label{YPThm}  The Yamabe invariant $Y(X,\partial X, [\overline{g}])$ is always attained by a smooth conformal metric satisfying (\ref{bdyYamabe}), provided one of the following holds:

\smallskip

$\mathrm{(i)}$ The dimension $3 \leq n \leq 5$.

$\mathrm{(ii)}$ The dimension $n \geq 6$, and $X$ is spin.
\end{theorem}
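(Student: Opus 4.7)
The plan is to reduce the theorem to establishing the strict version of Escobar's inequality
\[ Y(X,\partial X, [\overline{g}]) < Y(S^n_{+}, \partial S^n_{+}, [g_0]) \]
under each set of hypotheses. Once the strict inequality is in hand, a by-now standard argument (subcritical approximation together with concentration-compactness to rule out bubbling at the boundary) produces a smooth positive minimizer $u$ of $\mathcal{Y}$, and the corresponding Euler-Lagrange equations are precisely (\ref{bdyYamabe}): the metric $g_Y = u^{4/(n-2)}\overline{g}$ has constant scalar curvature in $X$ and minimal boundary. The boundary integral in $\mathcal{Y}[u]$ is exactly what forces the $H_{g_Y} = 0$ condition and controls the $H^1$ trace in the compactness analysis.

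For case (i) and for the non-umbilic subcase of (ii), the approach is the familiar Aubin-Schoen scheme of local test functions. I would choose a boundary point $p$ at which either the second fundamental form of $\partial X$ or, in low dimensions, the boundary Weyl tensor is nontrivial, introduce Fermi coordinates, and plug a suitably rescaled and truncated half-space Sobolev extremal into $\mathcal{Y}$. Expanding in the concentration parameter, the leading term reproduces the hemisphere value $n(n-1)(\omega_n/2)^{2/n}$, while the next-order term is a negative multiple of a local conformal invariant built from the trace-free second fundamental form (and from the boundary Weyl tensor in dimensions $3 \leq n \leq 5$). This gives strict inequality away from the hemisphere model.

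The crux is the remaining case of (ii): $n \geq 6$ with $\partial X$ totally umbilic, which is in fact the case forced on us by the Einstein condition on $g_{+}$. Here the local obstruction vanishes identically and a global mass-type invariant must enter. Following Brendle-Chen, I would let $G > 0$ be the Green's function for the conformal Laplacian on $(X,\overline{g})$ with Neumann boundary condition and pole $p \in \partial X$, and consider $h = G^{4/(n-2)}\overline{g}$ on $X \setminus \{p\}$. Umbilicity of $\partial X$ is exactly what ensures that, under reflection, $h$ extends to a $C^2$ (or better) metric $\widetilde{h}$ on the double $\widetilde{X} \setminus \{p\}$; a direct computation shows $(\widetilde{X} \setminus \{p\},\widetilde{h})$ is asymptotically flat with zero scalar curvature, and the asymptotic expansion of $G$ near $p$ produces an ADM-type coefficient $A$ whose positivity is equivalent to strict inequality in (\ref{Aubin}). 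A test function built from $G$ then yields a negative correction of the correct order.

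The main obstacle, accordingly, is the positive mass theorem for $(\widetilde{X}\setminus\{p\},\widetilde{h})$. This is where the spin hypothesis is used: the spin structure on $X$ lifts to $\widetilde{X}$, and Witten's spinor argument applies, identifying the ADM mass with the squared $L^2$-norm of a harmonic spinor and delivering $A \geq 0$ with equality iff $\widetilde{h}$ is flat, which is in turn iff $(X,\partial X,\overline{g})$ is conformal to the hemisphere. Executing this step rigorously requires care with the regularity and decay of $\widetilde{h}$ at infinity, the verification that the spin structure and Dirac operator behave well under doubling, and the sharp boundary asymptotics of the Neumann Green's function, as explained in \cite{BC} and the Appendix of \cite{EscobarJDG}; those three technical points are what I expect to consume most of the work.
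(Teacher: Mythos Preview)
Your proposal is correct and follows precisely the approach the paper summarizes in the paragraphs preceding this theorem: reduction to strict inequality in (\ref{Aubin}) via Escobar's local test-function argument in the low-dimensional and non-umbilic cases, and via the Brendle--Chen doubling/Green's-function construction together with Witten's spinorial positive mass theorem in the umbilic case for $n \geq 6$ with $X$ spin. The paper does not supply an independent proof but rather cites \cite{EscobarJDG}, \cite{BC}, and \cite{Witten} for exactly the steps you outline.
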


Using this result, we have

\begin{proposition}  \label{YPprop}  Let $(X, g_{+})$ be a Poincar\'e-Einstein manifold such that one of the following holds:

\smallskip

$\mathrm{(i)}$ The dimension $3 \leq n \leq 5$.


$\mathrm{(ii)}$ The dimension $n \geq 6$, and $X$ is spin.

\smallskip
\noindent
Assume that $(X,g_{+})$ is conformally compact of class $C^2$ and has a smooth representative in its conformal infinity.
Then there is a conformal compactification $\overline{g} = \rho^2 g_{+}$, at least $C^{3,\alpha}$ up to the boundary with $\alpha \in (0,1)$, satisfying

\smallskip
$\mathrm{(1)}$ The scalar curvature is constant:
\begin{align*}
R_{\overline{g}} = Y(X,\partial X,[\overline{g}]) \cdot Vol(X,\overline{g})^{-2/n}.
\end{align*}

$\mathrm{(2)}$ $\partial X$ is totally geodesic.
\end{proposition}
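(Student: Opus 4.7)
The plan is to apply Theorem~\ref{YPThm} (the solution of the Yamabe problem with minimal boundary) to a compactification associated to a smooth boundary representative of the conformal infinity, and then exploit the fact, cited from \cite{AndersonAM}, that Poincar\'e-Einstein metrics have totally umbilic boundary to upgrade ``minimal'' to ``totally geodesic.''

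Concretely, I would first fix a smooth representative $\gamma$ in the conformal infinity $[\gamma]$ together with a defining function $\rho_0$ (for instance, the geodesic normal defining function of $\gamma$), giving the initial $C^2$ compactification $\overline{g}_0 = \rho_0^2 g_{+}$ with $\overline{g}_0|_{\partial X} = \gamma$. Under either of the hypotheses (i) or (ii) of the proposition, Theorem~\ref{YPThm} supplies a positive conformal factor $u$ on $\overline{X}$ such that $g_Y = u^{4/(n-2)}\overline{g}_0$ realizes the Yamabe invariant $Y(X,\partial X,[\overline{g}_0])$ and satisfies~\eqref{bdyYamabe}. Setting $\rho := u^{2/(n-2)} \rho_0$ one has $g_Y = \rho^2 g_{+}$; since $u$ is positive on all of $\overline{X}$, the function $\rho$ is again a defining function for $\partial X$. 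Renaming $\overline{g} := g_Y$ then gives property~(1) directly from the first line of~\eqref{bdyYamabe}.

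For property~(2), umbilicity of $\partial X$ is preserved under conformal change of the ambient metric, so the second fundamental form of $\partial X$ in $g_Y$ is again a scalar multiple of $g_Y|_{\partial X}$. Combined with the minimality condition $H_{g_Y} = 0$ from~\eqref{bdyYamabe}, this forces the entire second fundamental form to vanish, so $\partial X$ is totally geodesic with respect to $\overline{g}$.

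The main technical point I anticipate is verifying the claimed $C^{3,\alpha}$ regularity of $\overline{g}$ up to the boundary. The equation for $u$ is a semilinear elliptic PDE in $X$ coupled with a Robin-type boundary condition of the schematic form $\tfrac{2}{n-2}\partial_\nu u + H_{\overline{g}_0}\,u = 0$, and Schauder theory for such mixed problems delivers $u \in C^{3,\alpha}(\overline{X})$ (and hence $\overline{g} = u^{4/(n-2)}\overline{g}_0 \in C^{3,\alpha}$) provided that $\overline{g}_0$ has sufficient regularity up to $\partial X$. Since the hypothesis only gives $C^2$ conformal compactness, one must first improve the initial compactification near $\partial X$ -- for example, by using the geodesic normal defining function of the smooth $\gamma$ together with the Einstein equation~\eqref{PEdef} to secure higher regularity of $\overline{g}_0$ in a collar neighborhood of $\partial X$ -- before applying Schauder estimates to $u$.
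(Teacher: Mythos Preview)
Your argument for properties~(1) and~(2) matches the paper's exactly: apply Theorem~\ref{YPThm} to some compactification, then combine the umbilicity of $\partial X$ (cited from \cite{AndersonAM}) with $H_{g_Y}=0$ to get a totally geodesic boundary.

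The only divergence is in the regularity claim. The paper does not attempt the Schauder bootstrap you outline; instead it simply invokes the boundary-regularity theorem of Chru\'sciel--Delay--Lee--Skinner \cite{CDLS}, which says that a Poincar\'e--Einstein metric that is $C^2$ conformally compact with a smooth representative at infinity admits a compactification that is smooth (in particular $C^{3,\alpha}$) up to the boundary. Once one has such a smooth initial compactification $\overline{g}_0$, the Schauder step for the Yamabe minimizer $u$ is routine. Your proposal to first ``improve the initial compactification near $\partial X$ \dots\ using the Einstein equation'' is exactly the nontrivial content of \cite{CDLS}, so rather than sketch it you should just cite that result; this closes the only loose end in your write-up.
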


\begin{remark} The regularity statement follows from a result of Chru\'siel-Delay-Lee-Skinner \cite{CDLS}.   Previously, Anderson \cite{AndersonAM} used a compactification via the Yamabe problem with Dirichlet boundary conditions \cite{Ma} to study the regularity of Poincar\'e-Einstein metrics in four dimensions. \end{remark}

%
\section{Poincar\'e-Einstein metrics and Yamabe invariants}  \label{SecMainThm}
%

In this section we use the compactification of Proposition \ref{YPprop}
to prove a sharp inequality between the Yamabe invariant defined in (\ref{YXMdef}), and the Yamabe invariant of the boundary $M = \partial X$.  Let
$\gamma = \overline{g}\big|_{\partial X}$, and let $Y(M,[\gamma])$ denote the Yamabe invariant of the conformal infinity of $(X,g_{+})$:
\begin{align} \label{YMdef}
Y(M,[\gamma]) = \inf_{\tilde{\gamma} \in [\gamma]} \dfrac{ \int_M R_{\tilde{\gamma}} dV_{\tilde{\gamma}}}{Vol(M,\tilde{\gamma})^{(n-3)/(n-1)}}.
\end{align}
Also, we denote the isoperimetric ratio by $I(X,\partial X, \overline{g})$:
\begin{align*} 
I(X,\partial X, \overline{g}) \equiv \dfrac{ Vol(\partial X, \gamma)^n}{ Vol(X, \overline{g})^{n-1}}.
\end{align*}
An asymptotic expansion for solutions of singular Yamabe equations plays a crucial role in the proof of the following result.

\begin{theorem}  \label{MainThm1} Let $(X,g_{+})$ be a Poincar\'e-Einstein manifold satisfying the hypotheses of Proposition \ref{YPprop}, with Yamabe compactification $(X,\partial X, \overline{g}).$  Let $\gamma = \overline{g}|_{M}$ denote the induced metric, where $M = \partial X$.

If the dimension of $X$ is $n \geq 4$, then
\begin{align} \label{KeyIn}
Y(X,\partial X ,[\overline{g}]) \cdot I(X,\partial X, \overline{g})^{\frac{2}{n(n-1)}} \geq \frac{n}{n-2} Y(M,[\gamma]).
\end{align}
If the dimension is $n = 3$, then
\begin{align} \label{2d}
Y(X,\partial X,[\overline{g}]) \cdot I(X,\partial X, \overline{g})^{1/3} \geq 12 \pi \chi(M).
\end{align}
If equality occurs then $\overline{g}$ is Einstein and $\gamma$ has constant scalar curvature.
\end{theorem}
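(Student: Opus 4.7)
The plan is to unravel \eqref{KeyIn}, reduce it via the Gauss equation to a nonnegativity statement for the integrated normal trace-free Ricci of $\overline{g}$ along $M$, and establish that statement by a single integration by parts that couples the Einstein condition of $g_+$ to the constancy of $R_{\overline{g}}$.

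Because $\overline{g}$ is the boundary Yamabe minimizer of its class with $H_{\overline{g}}=0$, the constant function $u \equiv 1$ attains the infimum in \eqref{YXMdef}, giving $Y(X,\partial X,[\overline{g}]) = R_{\overline{g}}\cdot Vol(X,\overline{g})^{2/n}$. Substituting this into \eqref{KeyIn} and using the explicit form of $I(X,\partial X,\overline{g})$ makes the $Vol(X,\overline{g})^{2/n}$ factors cancel, so \eqref{KeyIn} is equivalent to $Y(M,[\gamma]) \leq \tfrac{n-2}{n} R_{\overline{g}}\cdot Vol(M,\gamma)^{2/(n-1)}$. Testing \eqref{YMdef} with $\tilde{\gamma} = \gamma$ and applying the Gauss equation for the totally geodesic boundary, $R_\gamma = R_{\overline{g}} - 2Ric_{\overline{g}}(\nu,\nu)$, reduces the problem to the nonnegativity
\[
\int_M \mathring{Ric}_{\overline{g}}(\nu,\nu)\, dV_\gamma \geq 0,
\]
where $\mathring{Ric}_{\overline{g}} := Ric_{\overline{g}} - \tfrac{R_{\overline{g}}}{n}\overline{g}$ denotes the trace-free Ricci. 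This is the heart of the matter.

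To establish this nonnegativity, write $g_+ = u^{4/(n-2)}\overline{g}$ and set $f := u^{-2/(n-2)}$. The conformal transformation law for the Ricci tensor, combined with the Einstein condition $Ric_{g_+} = -(n-1)g_+$, yields after projecting onto the trace-free part the pointwise identity
\[
\mathring{Ric}_{\overline{g}} = -(n-2)\, u^{2/(n-2)}\, \mathring{\nabla^2 f},
\]
where $\mathring{\nabla^2 f} := \nabla^2 f - \tfrac{\Delta f}{n}\overline{g}$. On the other hand, constancy of $R_{\overline{g}}$ together with the contracted second Bianchi identity yields $\operatorname{div}_{\overline{g}}\mathring{Ric}_{\overline{g}} = 0$, so integration by parts against the vector field $\nabla f$ produces
\[
\int_X \langle\mathring{Ric}_{\overline{g}}, \nabla^2 f\rangle\, dV_{\overline{g}} = \oint_{\partial X} \mathring{Ric}_{\overline{g}}(\nu, \nabla f)\, dS_{\overline{g}}.
\]
The singular Yamabe expansion $u = \rho^{-(n-2)/2}(1 + a_2\rho^2 + O(\rho^3))$---whose absent $\rho^1$ coefficient is forced by matching terms in the equation together with the totally geodesic boundary---gives $f = \rho + O(\rho^3)$ in Fermi coordinates, so $|\nabla f|_{\overline{g}} = 1$ and $\nabla f = -\nu$ on $\partial X$, while $\nabla^2 f = O(\rho)$ near the boundary. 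Combining with the pointwise identity, the boundary integral equals $-\int_M \mathring{Ric}_{\overline{g}}(\nu,\nu)\, dV_\gamma$ and the bulk integrand equals $-(n-2)u^{2/(n-2)}|\mathring{\nabla^2 f}|^2 \leq 0$; rearranging,
\[
\int_M \mathring{Ric}_{\overline{g}}(\nu,\nu)\, dV_\gamma = (n-2)\int_X u^{2/(n-2)}|\mathring{\nabla^2 f}|^2\, dV_{\overline{g}} \geq 0,
\]
with equality forcing $\mathring{\nabla^2 f}\equiv 0$, hence $\mathring{Ric}_{\overline{g}}\equiv 0$; so $\overline{g}$ is Einstein, and Gauss's equation then makes $R_\gamma = \tfrac{n-2}{n}R_{\overline{g}}$ constant.

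The case $n = 3$ follows identically through the nonnegativity statement, after which Gauss--Bonnet, $\int_M R_\gamma\,dV_\gamma = 4\pi\chi(M)$, replaces the Yamabe-test reduction to yield \eqref{2d}. The main obstacle I anticipate is the rigorous integration by parts where $u$ blows up at $\partial X$: although $\mathring{Ric}_{\overline{g}}$ and $\nabla^2 f$ are bounded there, the identity relating them involves $u^{2/(n-2)} \sim \rho^{-1}$, which is tamed by the $O(\rho)$ vanishing of $\mathring{\nabla^2 f}$ (itself a direct consequence of the absent $\rho^1$ coefficient in $u$). Performing the computation first on $X_\varepsilon = \{\rho > \varepsilon\}$ and passing to the limit $\varepsilon \to 0$, the inner-boundary contribution at $\rho = \varepsilon$ is $O(\varepsilon)$ and the bulk integrand is $L^1$, so no renormalization is required.
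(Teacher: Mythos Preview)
Your argument is correct and follows the same Obata-type strategy as the paper: both exploit the pointwise identity $E_{\overline{g}} = -(n-2)\rho^{-1}\big(\nabla^2_{\overline{g}}\rho - \tfrac{1}{n}(\Delta_{\overline{g}}\rho)\overline{g}\big)$ coming from the Einstein condition on $g_+$ (your $f$ is exactly the paper's $\rho$), pair with $E_{\overline{g}}$, integrate over $X_\epsilon$, and integrate by parts using $\operatorname{div}_{\overline{g}} E_{\overline{g}}=0$ from the constancy of $R_{\overline{g}}$; the bulk term is manifestly nonnegative in both versions and yields the same identity $\oint_M E_{\overline{g}}(\nu,\nu)\,dV_\gamma=\tfrac{1}{n-2}\int_X\rho\,|E_{\overline{g}}|^2\,dV_{\overline{g}}$.

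The only genuine difference is how the boundary term is evaluated. The paper substitutes the conformal identity once more into the surface integral on $\partial X_\epsilon$, rewrites it via the scalar curvature equation for $g_+$, and then feeds in the singular Yamabe expansion of $\rho$ through the third coefficient $c_3$ to compute the $\epsilon\to 0$ limit, obtaining $\tfrac{1}{n}\oint_M\big(R_{\overline{g}}-\tfrac{n}{n-2}R_\gamma\big)\,dV_\gamma$ directly. You instead observe that, since $E_{\overline{g}}$ extends continuously to $M$ and $\nabla\rho\to-\nu$ there, the boundary term is simply $-\oint_M E_{\overline{g}}(\nu,\nu)\,dV_\gamma$, which you convert to $R_\gamma$ via the Gauss equation for the totally geodesic boundary. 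Your route is shorter and uses only $c_1=1$ and $c_2=0$ (the latter being equivalent to boundedness of $E_{\overline{g}}$, hence automatic from the $C^{3,\alpha}$ regularity of $\overline{g}$), whereas the paper's computation needs the explicit value of $c_3$. The paper's approach, in exchange, makes the link to the singular Yamabe asymptotics and the renormalized-volume formalism more transparent.
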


\begin{remark}  The inequality (\ref{KeyIn}) is sharp, in the sense that equality is achieved when $(X,M,\overline{g})$ is conformally equivalent to the hemisphere.
\end{remark}

\begin{proof}  The proof is based on the technique used by Obata to characterize the uniqueness of Yamabe metrics for Einstein manifolds \cite{Obata}.  In our setting, we have a boundary term, and this will require some additional information about the asymptotic behavior of solutions of the defining function.

To begin, let $\overline{g} = \rho^2 g_{+}$ be the Yamabe compactification of $g_{+}$ given by Proposition \ref{YPprop}.  By the standard formula which relates the trace-free Ricci tensor $E$ of $\overline{g}$ and $g_{+}$ we have
\begin{align} \label{Echange}  \begin{split}
E_{\overline{g}} &= E_{g_{+}} - (n-2) \rho^{-1} \big[ \nabla_{\overline{g}}^2 \rho - \frac{1}{n}  ( \Delta_{\overline{g}} \rho ) \overline{g} \big] \\
&=  - (n-2) \rho^{-1} \big[ \nabla_{\overline{g}}^2 \rho - \frac{1}{n}  ( \Delta_{\overline{g}} \rho ) \overline{g} \big],
\end{split}
\end{align}
where the second line follows from the Poincar\'e-Einstein condition.  Let $\epsilon > 0$ be small, and denote
\begin{align*}
X_{\epsilon} = \{ x \in X \ :\ d_{\overline{g}}(x,M) \geq \epsilon \},
\end{align*}
where $d_{\overline{g}}$ denotes the distance to the boundary with respect to $\overline{g}$.  We multiply both sides of (\ref{Echange}) by $\rho$, pair with $E_{\overline{g}}$, then integrate over $X_{\epsilon}$.  Since $E_{\overline{g}}$ is trace-free, we get
\begin{align*} 
\int_{X_{\epsilon}} | E_{\overline{g}}|^2_{\overline{g}} \rho  \hs dV_{\overline{g}} = - (n-2)  \int_{X_{\epsilon}} \overline{g}^{ik} \overline{g}^{j \ell} ( E_{\overline{g}})_{ij} ( \nabla_k \nabla_{\ell} \rho )  \hs dV_{\overline{g}}, \end{align*}
where the covariant derivatives are with respect to $\overline{g}$.  Next, we integrate by parts on the right-hand side: 
\begin{align} \label{s2}\begin{split}
\int_{X_{\epsilon}} | E_{\overline{g}}|^2_{\overline{g}} \rho\hs dV_{\overline{g}} &=   (n-2)  \int_{X_{\epsilon}} \overline{g}^{ik} \overline{g}^{j \ell} \nabla_k ( E_{\overline{g}})_{ij} \nabla_{\ell} \rho  \hs dV_{\overline{g}}\\
&\qquad - (n-2) \oint_{\partial X_{\epsilon}}\overline{g}^{j \ell} ( E_{\overline{g}} )_{ij} \nabla_{\ell} \rho  N^i \hs dS_{\overline{g}},
\end{split}\end{align}
where $N$ is the outward unit normal and $dS_{\overline{g}}$ is the area form on $\partial X_{\epsilon}$ with respect to $\overline{g}$.  By the contracted second Bianchi identity,
\begin{align*}
\overline{g}^{ik} \nabla_k ( E_{\overline{g}})_{ij} &= \frac{(n-2)}{2n} \nabla_j R_{\overline{g}} = 0,
\end{align*}
hence the interior term vanishes.

For the boundary term, we use (\ref{Echange}) once again to rewrite the trace-free Ricci term:
\begin{align} \label{s3} \begin{split}
- (n-2)& \oint_{\partial X_{\epsilon}}\overline{g}^{j \ell}  ( E_{\overline{g}} )_{ij} \nabla_{\ell} \rho  N^i \hs dS_{\overline{g}} \\
&= (n-2)^2 \oint_{\partial X_{\epsilon}}\overline{g}^{j \ell} \rho^{-1} \Big\{ \nabla_i \nabla_j \rho - \frac{1}{n}  ( \Delta_{\overline{g}} \rho) \overline{g}_{ij} \Big\} \nabla_{\ell} \rho  N^i \hs dS_{\overline{g}} \\
&= (n-2)^2 \oint_{\partial X_{\epsilon}} \rho^{-1} \Big\{ N^i \overline{g}^{j \ell}   \nabla_i \nabla_j \rho \nabla_{\ell} \rho - \frac{1}{n}  ( \Delta_{\overline{g}} \rho) (N^i \nabla_i \rho)  \Big\} \hs   dS_{\overline{g}} \\
&= \frac{(n-2)^2}{2} \oint_{\partial X_{\epsilon}} \rho^{-1} \Big\{  N \big( |\nabla \rho|^2_{\overline{g}} \big) - \frac{2}{n} ( \Delta_{\overline{g}} \rho) \big( N \rho \big)  \Big\} \hs   dS_{\overline{g}}.
\end{split}
\end{align}
Combining (\ref{s2}) and (\ref{s3}),
\begin{align} \label{s4}
\frac{2}{(n-2)^2} \int_{X_{\epsilon}} | E_{\overline{g}}|^2_{\overline{g}} \rho\hs dV_{\overline{g}} =  \oint_{\partial X_{\epsilon}} \rho^{-1} \Big\{ N \big( |\nabla \rho|^2_{\overline{g}} \big) - \frac{2}{n} ( \Delta_{\overline{g}} \rho) \big( N \rho \big)  \Big\} \hs   dS_{\overline{g}}.
\end{align}

To evaluate the boundary integral we first use the fact that $\overline{g} = \rho^2 g_{+}$ and $g_{+}$ has constant negative scalar curvature, which we have normalized to be $-n(n-1)$.  This implies (via the scalar curvature equation) that
\begin{align*} 
- \frac{2}{n} \Delta_{\overline{g}} \rho = - \dfrac{|\nabla_{\overline{g}} \rho|^2}{\rho} + \rho^{-1} + \frac{1}{n(n-1)}R_{\overline{g}} \rho.
\end{align*}
Therefore, we can rewrite (\ref{s4}) as
\begin{align} \label{s5} \begin{split}
&\frac{2}{(n-2)^2}  \int_{X_{\epsilon}}  | E_{\overline{g}}|^2_{\overline{g}} \rho\hs dV_{\overline{g}} \\
&\quad= \oint_{\partial X_{\epsilon}} \rho^{-1} \Big\{ N \big( |\nabla \rho|^2_{\overline{g}} \big) +    \big[ - \dfrac{|\nabla_{\overline{g}} \rho|^2}{\rho} + \rho^{-1} + \frac{1}{n(n-1)}R_{\overline{g}} \rho \big]( N \rho ) \Big\} \hs   dS_{\overline{g}}.
\end{split}
\end{align}

The second consequence of $g_{+}$ having constant negative scalar curvature is that it must be the unique solution of the Loewner-Nirenberg problem on $(X,\partial X, \overline{g})$ (see \cite{LN}, \cite{AM}).  That is, $g_{+}$ is the unique complete metric of constant negative scalar curvature defined in $X$ which is conformal to $\overline{g}$.
 Asymptotic expansions for solutions near the boundary have been carried out by various authors; e.g. see \cite{ACF1982CMP}.  To determine the boundary term in (\ref{s5}), we will need an expansion of the form
\begin{align} \label{formal}
\rho(x) = c_1 r + c_2 r^2 + c_3 r^3 + O(r^{3 + \alpha}),
\end{align}
where $r(x) = d_{\overline{g}}(x,M)$ denotes the distance to the boundary, and the coefficients are functions defined on $M$.  We will also need corresponding expansions for $|\nabla \rho|^2_{\overline{g}}$, $\partial_r\rho$, and $\partial_r|\nabla \rho|^2_{\overline{g}}$.

The existence of polyhomogeneous expansions and estimates for $\rho$ and its derivatives appear in \cite{ACF1982CMP} and \cite{Mazzeo1991}.  A formal expansion, with explicit expressions for the coefficients in (\ref{formal}), appears in \cite{GrahamSY} (see Section 4).  In our setting, since the boundary is totally geodesic we have
\begin{align} \label{cs} \begin{split}
c_1 &= 1, \\
c_2 &= 0, \\
c_3 &= - \frac{1}{3} \Big\{ \frac{1}{2(n-1)} R_{\overline{g}} - \frac{1}{2(n-2)} R_{\gamma} \Big\}.
\end{split}
\end{align}
Optimal estimates for the remainder in (\ref{formal}) under the assumption of $C^{3,\alpha}$-regularity can be modified from the estimates in \cite{HanJiang2014}.   In fact, we can derive these estimates by the maximum principle and scaled Schauder estimates, since these estimates involve only computable coefficients (i.e., the coefficients of the so-called local terms); see \cite{HanJiang2014}.

Using the formulas in (\ref{cs}), we can therefore write
\begin{align*}
\rho = r - \frac{1}{3} A r^3 + O(r^{3+\alpha}),
\end{align*}
where
\begin{align} \label{A}
A = \frac{1}{2(n-1)} R_{\overline{g}} - \frac{1}{2(n-2)} R_{\gamma}.
\end{align}
In addition,
\begin{align*}
\rho^{-1} &= r^{-1} + \frac{1}{3} A r + O(r^{1+\alpha}), \\
|\nabla \rho|^2_{\overline{g}} &= 1 - 2 A r^2 + O(r^{2+\alpha}).
\end{align*}
Using the fact that $N = - \frac{\partial}{\partial r}$, it follows that on $\partial X_{\epsilon}$,
\begin{align} \label{bdy}  \begin{split}
\rho^{-1} \Big\{ N \big( |\nabla \rho|^2_{\overline{g}} \big) +    \big[ - \dfrac{|\nabla_{\overline{g}} \rho|^2}{\rho} + & \rho^{-1} +  \frac{1}{n(n-1)}R_{\overline{g}} \rho \big]( N \rho ) \Big\} \\
& = 2 A - \frac{1}{n(n-1)} R_{\overline{g}} + O(\epsilon^\alpha).
\end{split}
\end{align}
By (\ref{A}),
\begin{align}\label{eq-relation}
2 A - \frac{1}{n(n-1)} R_{\overline{g}} = \frac{1}{n} \big[ R_{\overline{g}} - \frac{n}{n-2}  R_{\gamma} \big].
\end{align}
Substituting (\ref{eq-relation}) into (\ref{bdy}) gives
\begin{align} \label{limit} \begin{split}
\lim_{\epsilon \to 0} \oint_{\partial X_{\epsilon}} \rho^{-1} \Big\{ N \big( |\nabla \rho|^2_{\overline{g}} \big) +    \big[ - \dfrac{|\nabla_{\overline{g}} \rho|^2}{\rho}  & + \rho^{-1} + \frac{1}{n(n-1)}R_{\overline{g}} \rho \big]( N \rho ) \Big\} \hs   dS_{\overline{g}}  \\
 & = \frac{1}{n} \oint_{M} \big[ R_{\overline{g}} -  \frac{n}{n-2} R_{\gamma} \big] dV_{\gamma}.
\end{split}
\end{align}
From (\ref{s5}) we conclude
\begin{align} \label{s55}
\frac{1}{n} \oint_{M} \big[ R_{\overline{g}} -  \frac{n}{n-2} R_{\gamma} \big] dV_{\gamma} = \frac{2}{(n-2)^2}  \int_{X} | E_{\overline{g}}|^2_{\overline{g}} \rho\hs dV_{\overline{g}}.
\end{align}
This implies, by Proposition \ref{YPprop} and the definition (\ref{YMdef}),
\begin{align} \label{s6}  \begin{split}
Y(X,&\partial X,[\overline{g}]) \hs  Vol(X,\overline{g})^{-2/n} \hs Vol(M,\gamma) \\
 &= \oint R_{\overline{g}} \hs dV_{\gamma}  \\
&= \frac{n}{n-2} \oint_M R_{\gamma} dV_{\gamma} + \frac{2n}{(n-2)^2}  \int_{X} | E_{\overline{g}}|^2_{\overline{g}} \rho\hs dV_{\overline{g}} \\
&\geq \frac{n}{n-2} Y(M,[\gamma]) \hs Vol(M,\gamma)^{(n-3)/(n-1)} + \frac{2n}{(n-2)^2}  \int_{X} | E_{\overline{g}}|^2_{\overline{g}} \rho\hs dV_{\overline{g}}.
\end{split}
\end{align}
Dropping the integral over $X$ and dividing by the volume of $M$ we get
\begin{align*}
Y(X,\partial X,[\overline{g}]) \hs Vol(X,\overline{g})^{-2/n}  \geq \frac{n}{n-2} Y(M,[\gamma]) \hs Vol(M,\gamma)^{-2/(n-1)},
\end{align*}
which implies (\ref{KeyIn}).  In addition, if equality holds in (\ref{KeyIn}) then $E_{\overline{g}} \equiv 0$, hence $\overline{g}$ is Einstein.  Since $M$ is totally geodesic, the Gauss curvature equation implies
\begin{align*}
R_{\overline{g}} &= 2 Ric_{\overline{g}}(N,N) + R_{\gamma} \\
&= \frac{2}{n} R_{\overline{g}} + R_{\gamma},
\end{align*}
and $R_{\gamma}$ must be constant.
\end{proof}

An immediate corollary of Theorem \ref{MainThm1} is the following result first proved by J. Qing \cite{JQ}:

\begin{corollary} (See \cite{JQ}; also \cite{CQY}) Let $(X,g_{+})$ be a Poincar\'e-Einstein manifold satisfying the hypotheses of Proposition \ref{YPprop}, and let $(M,[\gamma])$ denote its conformal infinity. If
\begin{align*} 
Y(M,[\gamma]) > 0,
\end{align*}
then the Yamabe invariant (\ref{YXMdef}) must be positive:
\begin{align*}
Y(X,\partial X, [\bar{g}]) > 0.
\end{align*}
\end{corollary}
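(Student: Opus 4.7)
The plan is to derive the corollary as a direct consequence of Theorem \ref{MainThm1}. The hypotheses of the corollary are identical to those of the theorem, so the sharp inequality (\ref{KeyIn}) (or (\ref{2d}) in dimension three) is available with $\overline{g}$ the Yamabe compactification provided by Proposition \ref{YPprop}.

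First I would observe that the isoperimetric ratio $I(X,\partial X,\overline{g}) = Vol(\partial X,\gamma)^{n}/Vol(X,\overline{g})^{n-1}$ is a strictly positive real number, since $\overline{g}$ is a smooth metric on a compact manifold with boundary and the induced metric $\gamma$ has positive volume. Hence the factor $I(X,\partial X,\overline{g})^{2/(n(n-1))}$ appearing on the left-hand side of (\ref{KeyIn}) is finite and strictly positive.

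For $n \geq 4$, assuming $Y(M,[\gamma]) > 0$, the right-hand side of (\ref{KeyIn}) is strictly positive, so dividing through by $\tfrac{n-2}{n} I(X,\partial X,\overline{g})^{2/(n(n-1))} > 0$ yields the quantitative bound
\begin{equation*}
Y(X,\partial X,[\overline{g}]) \;\geq\; \frac{n}{n-2}\, Y(M,[\gamma]) \cdot I(X,\partial X,\overline{g})^{-\frac{2}{n(n-1)}} \;>\; 0,
\end{equation*}
which is the conclusion. For the dimension $n=3$ case I would instead invoke (\ref{2d}); here the definition (\ref{YMdef}) of $Y(M,[\gamma])$ reduces (via Gauss--Bonnet) to $4\pi\chi(M)$, so $Y(M,[\gamma]) > 0$ is equivalent to $\chi(M) > 0$, and (\ref{2d}) then forces $Y(X,\partial X,[\overline{g}]) > 0$.

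There is essentially no obstacle: the only minor point requiring comment is the verification that the isoperimetric factor does not degenerate, which follows from the fact that $\overline{g}$ is a genuine $C^{3,\alpha}$ compactification up to the boundary, as guaranteed by Proposition \ref{YPprop}. I would also remark that, unlike Qing's original argument in \cite{JQ}, this derivation proceeds directly from the sharp inequality of Theorem \ref{MainThm1} without recourse to a separate positive-mass or conformal-deformation argument.
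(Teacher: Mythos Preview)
Your proposal is correct and matches the paper's approach exactly: the paper presents this corollary as ``an immediate corollary of Theorem \ref{MainThm1}'' with no separate proof, and your argument simply spells out why the inequality (\ref{KeyIn}) (resp.\ (\ref{2d})) forces positivity once the isoperimetric factor is observed to be finite and positive. The only trivial slip is that you say you divide by $\tfrac{n-2}{n}\,I^{2/(n(n-1))}$ when you actually divide by $I^{2/(n(n-1))}$ alone; the displayed inequality you write is nonetheless correct.
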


In fact, J. Qing proved the existence of a defining function $v$ such that the metric $v^2 g_{+}$ has totally geodesic boundary, and the scalar curvature satisfies
\begin{align*}
R_{v^2 g_{+}} \geq \frac{n}{n-2} Y(M, [\gamma]) Vol(M,\overline{\gamma})^{2/(n-1)},
\end{align*}
where $\overline{\gamma} \in [\gamma]$ is a Yamabe metric in the conformal infinity.  Integrating this over $X$ we obtain and inequality that is weaker than (\ref{KeyIn}).  On the
other hand, both results rely on the solution of the Yamabe problem (either for the boundary or interior).

\section{Obstructions to Poincar\'e-Einstein fillings} \label{CorSec}

In this section we prove Theorem \ref{MainIntro}.  Since the proof uses in a crucial way the construction by Gromov-Lawson \cite{GL} of metrics with PSC on the seven-dimensional sphere $S^7$, we will provide a brief sketch (see \cite{Rosenberg} for a nice survey with related results).

The starting point is an earlier construction of PSC metrics (e.g., \cite{GL2}) on the total space of an oriented $\mathbb{R}^4$-bundle $\mathcal{E}$ over $S^4$.  This gives a metric of PSC on the unit-disk bundle $D(\mathcal{E})$; furthermore, the metric can be made a product near the boundary; i.e., the unit sphere bundle $\Sigma(\mathcal{E})$.  A result of Milnor \cite{M1} provides criteria in terms of the Euler number and Pontrjagin number of $\mathcal{E}$ for determining when $\Sigma(\mathcal{E})$ is diffeormorphic to the standard $S^7$.  Using this result, one can construct a sequence of metrics (by varying the bundle $\mathcal{E}$) of PSC metrics on $S^7$.  By a relative index calculation, Gromov-Lawson showed that these metrics are in different components of $\mathfrak{R}^{+}(S^7)$ (see Section 4 of \cite{GL}).

The relevant point for us is that these metrics cannot be extended to metrics of PSC on $B^8$: if they could, then $N^8 = D(\mathcal{E}) \cup B^8$ would admit a metric of positive scalar curvature.  Since $N^8$ is spin, it would follow that the $\widehat{A}$-genus of $N^8$ vanishes, but using another result of Milnor (\cite{M2}) one can compute the $\widehat{A}$-genus explicitly in terms of the Pontrjagin number of $\mathcal{E}$ and see that it is non-zero.

In the following, let $Y^8 = D(\mathcal{E})$ denote the unit disk bundle with $\partial Y^8 = \Sigma(\mathcal{E}) \approx S^7$, and let $\eta$ be a metric of positive scalar curvature on $Y^8$ as described above.  The induced metric on $S^7 = \partial Y^8$ will be denoted by $\eta_0$.

\begin{theorem} \label{Cor2} $(S^7, [\eta_0])$ cannot be the conformal infinity of a Poincar\'e-Einstein metric on the ball $B^8$. \end{theorem}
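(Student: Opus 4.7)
The strategy is a standard Lichnerowicz-type obstruction. A Poincar\'e-Einstein filling of $(S^7,[\eta_0])$ by $B^8$ should (by the results of Section \ref{SecMainThm}) yield a metric of positive scalar curvature on $B^8$ with totally geodesic boundary; combined with the Gromov-Lawson PSC metric on $Y^8$, this should produce a PSC metric on the closed spin $8$-manifold $N^8 = Y^8 \cup_{S^7} B^8$, contradicting the nonvanishing $\widehat{A}$-genus recalled above.

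Suppose for contradiction that $(B^8, g_{+})$ is Poincar\'e-Einstein with conformal infinity $[\eta_0]$. Since $\eta$ is a Riemannian product $dt^2 + \eta_0$ in a collar of $\partial Y^8$, the induced metric $\eta_0$ on $S^7$ has positive scalar curvature, and hence $Y(S^7,[\eta_0]) > 0$. Proposition \ref{YPprop}(ii) applies (the dimension $n = 8 \geq 6$ and $B^8$ is spin) and produces a Yamabe compactification $\overline{g} = \rho^2 g_{+}$ of constant scalar curvature with totally geodesic boundary. Theorem \ref{MainThm1} (or its Corollary) then forces $R_{\overline{g}} > 0$. Denote the boundary metric $h := \overline{g}|_{S^7} \in [\eta_0]$.

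The core step is to combine $(B^8, \overline{g})$ and $(Y^8, \eta)$---two compact spin manifolds, each carrying a PSC metric with totally geodesic boundary---into a single PSC metric on $N^8$. The only mismatch is that the boundary metrics $h$ and $\eta_0$ are in general only conformally related. The natural remedy is to interpose a long cylindrical neck $C = [0,L]\times S^7$ equipped with a warped metric $ds^2 + h_s$, where $s \mapsto h_s$ is a smooth family of PSC metrics on $S^7$ running from $h$ at $s=0$ to $\eta_0$ at $s=L$, constant in $s$ near the two endpoints. The scalar curvature of such a metric equals $R_{h_s}$ plus $O(L^{-2})$ correction terms from $\partial_s h_s$, so for $L$ sufficiently large it is positive provided $R_{h_s} > 0$ throughout. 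Such a family is provided by the open convex positive cone $\mathcal{C}^{+} = \{u > 0 : L_{\eta_0} u > 0\}$ of the conformal Laplacian of $(S^7,\eta_0)$: any segment inside $\mathcal{C}^{+}$ yields a family $h_s = u_s^{4/(n-2)}\eta_0$ of conformal metrics of positive scalar curvature. Gluing $C$ between the two totally geodesic boundaries then furnishes a smooth PSC metric on $N^8$, contradicting $\widehat{A}(N^8) \neq 0$.

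The hard part of the plan is arranging that $h$ itself lies in $\mathcal{C}^{+}$---i.e.\ that $R_h$ is pointwise positive---so the interpolation makes sense. Theorem \ref{MainThm1} only controls the integral of $R_h$, not its sign. Before the interpolation one must therefore replace $\overline{g}$ by a conformal rescaling $v^{4/(n-2)}\overline{g}$ satisfying the Neumann condition $\partial_\nu v = 0$ on $S^7$ (to preserve totally geodesic boundary) together with $L_{\overline{g}} v > 0$ (to preserve PSC), with $v|_{\partial}$ prescribed so that the new boundary metric is pointwise PSC. The existence of such a $v$ is the delicate point; it is produced using positivity of $Y(B^8, S^7, [\overline{g}])$ (from the Corollary) and the Escobar--Brendle--Chen solution of the boundary Yamabe problem applied to a suitable auxiliary conformal class. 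Once this rescaling is in hand, the cylindrical interpolation above closes the argument.
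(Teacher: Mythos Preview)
Your global strategy---Poincar\'e-Einstein filling plus the Gromov--Lawson metric on $Y^8$ should force PSC on $N^8 = Y^8 \cup_{S^7} B^8$, contradicting $\widehat A(N^8)\neq 0$---is exactly the paper's. But your mechanism for crossing the conformal mismatch between $h=\overline g|_{S^7}$ and $\eta_0$ is different, and the step you yourself flag as ``delicate'' is a genuine gap. You require a $v>0$ on $\overline{B^8}$ with $\partial_\nu v=0$ on $S^7$, $L_{\overline g}v>0$ on $B^8$, \emph{and} $v|_{S^7}$ prescribed so that the induced boundary metric has pointwise positive scalar curvature. Prescribing both $v|_{S^7}$ and $\partial_\nu v|_{S^7}$ is Cauchy data for an elliptic operator, and asking in addition for $L_{\overline g}v>0$ throughout is not delivered by the Escobar/Brendle--Chen boundary Yamabe theorem: that theorem produces exactly the $\overline g$ you already have (constant scalar curvature, minimal boundary), with no freedom in the boundary value. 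Your sentence ``it is produced using positivity of $Y(B^8,S^7,[\overline g])$ \ldots\ applied to a suitable auxiliary conformal class'' is not an argument. A second, smaller gap: even after your rescaling, $(B^8,v^{4/(n-2)}\overline g)$ has totally geodesic boundary but is \emph{not} a product near $\partial B^8$, so the cylinder-to-$B^8$ junction is only $C^1$; you still owe a smoothing that preserves PSC there.

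The paper avoids both problems by \emph{not} insisting on pointwise PSC on the $B^8$ side. It extends the conformal factor relating $h$ and $\eta_0$ inward constant along $\overline g$-normals (so $\partial_\nu v=0$ automatically), cuts off to $1$ in the interior, and obtains a metric $\widetilde g$ on $B^8$ with boundary exactly $\eta_0$ and totally geodesic boundary---but with no control on $R_{\widetilde g}$. It then smooths the resulting $C^1$ metric on $N^8$ to a family $\widetilde g_{\delta}$ (by damping the second-order Taylor expansion in a shrinking collar) and shows that the first eigenvalue of the conformal Laplacian of $\widetilde g_{\delta}$ is eventually positive. The key is conformal invariance: on $B^8\setminus U_\delta$ the Rayleigh quotient for $\widetilde g$ can be rewritten in terms of $\overline g$ via $\widetilde g=v^{2/3}\overline g$, and the integrand becomes $|\nabla_{\overline g}(u v)|^2 + R_{\overline g}(uv)^2$, which is nonnegative since $R_{\overline g}>0$; the $Y^8$ piece is handled by $R_\eta>0$; and the collar contributes $O(\delta)$. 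This replaces your neck interpolation and your unresolved existence problem for $v$ with a direct eigenvalue estimate, and it is what you should supply.
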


\begin{proof}  Suppose $(B^8, g_{+})$ is a Poincar\'e-Einstein metric whose conformal infinity is $[\eta_0]$.  Let $\overline{g}$ denote the Yamabe compactification of $g_{+}$ given by Proposition \ref{YPprop} (with slight modifications to the proof we could also use the defining function constructed by J. Qing in \cite{JQ}).  Note that regularity will not be a consideration, since by \cite{CDLS} there is a compactification which is smooth up to the boundary.

Since $Y(S^7,[\eta_0]) > 0$, it follows that $R_{\overline{g}} > 0$.  This is not an immediate contradiction: the Yamabe metric is an extension of a metric in the conformal class of $\eta_0$, but not necessarily of $\eta_0$.  However, we will use the fact that the entire construction is, in some sense, conformally invariant.  To this end, since $\overline{g}|_{S^7}$ is conformal to $\eta_0$ we can write
\begin{align} \label{vdef}
\eta_0 = v^{4/(n-2)}\overline{g} \big|_{S^7} = v^{2/3}\overline{g} \big|_{S^7}
\end{align}
for some function $v > 0$ on $S^7$.  As a first step we want to extend $\eta_0$ inside $B^8$ to a metric which is conformal to $\overline{g}$.  To this end, for $\delta_0 > 0$ small let
\begin{align*} 
V_0 = \{ p \in B^8 \ |\ d_{\overline{g}}(p,S^7) < \delta_0  \}
\end{align*}
denote a collar neighborhood of the boundary of $B^8$.  We fix $\delta_0 > 0$ small enough so that $V_{0}$ can be identified with $\partial B^8 \times [0,\delta_0)$ via the normal exponential map; i.e., given $p \in V_0$ we can write $p = (x,\tau)$ to mean $p$ is obtained by following the unit speed geodesic starting at $x \in \partial B^8$ with initial velocity given by the inward unit normal for time $\tau$.  In $V_{0}$, we define the function $v_1 : V_0 \cong S^7 \times [0,\delta_0) \rightarrow \mathbb{R}^{+}$ by
\begin{align*} 
v_1(x,\tau) = v(x),
\end{align*}
where $v$ is defined by (\ref{vdef}).  By construction,
\begin{align} \label{Nbv}
N_{\overline{g}}v_1 \big|_{S^7} = 0,
\end{align}
where $N_{\overline{g}}$ is the outward normal with respect to $\overline{g}$.  Let $\chi_0$ be a cut-off function with $0 \leq \chi_0 \leq 1$, $\chi_0 \equiv 0$ near $\partial B^8$, and $\chi_0 \equiv 1$ on $B^8 \setminus V_0$.
We then define the conformal factor $v : \overline{B}^8 \rightarrow \mathbb{R}^{+}$ by
\begin{align*} 
v = \chi_0 + ( 1 - \chi_0) v_1,
\end{align*}
and the conformal metric
\begin{align*} 
\widetilde{g} = v^{2/3} \overline{g}.
\end{align*}
By construction,
\begin{align*}
\widetilde{g} \big|_{S^7} = v^{2/3}\overline{g}\big|_{S^7} = \eta_0.
\end{align*}

Next, we want to show that $S^7 = \partial B^8$ is totally geodesic with respect $\widetilde{g}$.  To see this, we recall the formula for the transformation of the mean curvature under a conformal change of metric: if $H_{\overline{g}}$ and $H_{\widetilde{g}}$ are the mean curvatures of $\partial B^8$ with respect to $\overline{g}$ and $\widetilde{g}$, then in dimension $n = 8$,
\begin{align} \label{Hcon}
N_{\overline{g}}v + 3 H_{\overline{g}} v = 3 H_{\widetilde{g}} v^{4/3},
\end{align}
where $N_{\overline{g}}$ is the outward normal with respect to $\overline{g}$.  Recall by Proposition \ref{YPprop} that $H_{\overline{g}} = 0$.  Also, since $v = v_1$ near the boundary,
it follows from (\ref{Nbv}) that
\begin{align*} 
N_{\overline{g}}v \big|_{\partial B^8} = 0.
\end{align*}
From (\ref{Hcon}) we see that $H_{\widetilde{g}} = 0$.  Since the boundary is totally
umbilic with respect to $\overline{g}$, and this condition is conformally invariant, it must be totally umbilic with
respect to $\widetilde{g}$.  Since the mean curvature is zero, it follows that the boundary is totally geodesic.

Recall $\eta$ is a product metric near the boundary of $Y^8$, so we can write
\begin{align} \label{split}
\eta = ds^2 + \eta_0
\end{align}
where $s \in [0,\epsilon_0)$, with $\epsilon_0 > 0$ small.   In particular $\partial Y^8 = S^7$ is totally geodesic in $Y^8$. Since $(Y^8,\eta)$ and $(B^8,\widetilde{g})$ have the same induced metric on their common boundary, and since the boundary is totally geodesic with respect to both metrics,
it follows that the metric
\begin{align} \label{g0}
g_0 = \begin{cases} \eta \ \ \mbox{ on } Y^8, \\
\widetilde{g} \ \ \mbox{ on }B^8,
\end{cases}
\end{align}
is $C^1$ on the closed manifold $N^8 = Y^8 \cup B^8$.  We want to argue that this implies that $N^8$ admits a conformal metric of positive scalar curvature, which, as we observed above, is a contradiction.  To make the argument work, however, we will need to modify $\wtg$ near the boundary of $B^8$ in order to construct a $C^2$-metric on $N^8$.  We now proceed to do this, and then explain how it can be used to construct a PSC metric.

For $\delta > 0$ small, let
\begin{align*} 
\Ud = \{ p \in B^8\ :\ d_{\wtg}(p,\partial B^8) < \delta \}
\end{align*}
be a collar neighborhood of the boundary of $B^8$, where $d_{\wtg}$ is the distance with respect to $\wtg$.  For $\delta > 0$ sufficiently small, we can express $\wtg$ in $\Ud$ as
\begin{align*} 
\wtg = dt^2 + \eta_0 + t^2 h + k,
\end{align*}
where
\begin{align*}  
t &= d_{\wtg}(\cdot, \partial B^8), \\
h &= Rm_{\wtg}( \cdot, \tN, \cdot, \tN),
\end{align*}
with $Rm_{\wtg}$ the curvature tensor and $\tN$ the outward unit normal on $\partial B^8$ with respect to $\wtg$, and $k$ is a tensor satisfying
\begin{align*}
k = O(t^3)
\end{align*}
(see \cite{PS}, Section 5).  Let $\psi \in C^{\infty}(\mathbf{R})$ be a cut-off function with $0 \leq \psi \leq 1$,
\begin{align*}
\psi(t) =
\begin{cases}
0, \ \mbox{for } t \leq \delta/2, \\
1, \ \mbox{for } t \geq \delta.
\end{cases}
\end{align*}
We also assume
\begin{align} \label{Dpsi}
|\psi'(t)| \leq \dfrac{C}{\delta}, \ \ |\psi''(t)| \leq \dfrac{C}{\delta^2}.
\end{align}
Now define
\begin{align}  \label{gddef}
\wtd = \begin{cases}
dt^2 + \eta_0 + \psi(t) \big( t^2 h + k \big) \ \ \mbox{in } U_{\delta}, \\
\wtg \ \ \mbox{in } B^8 \setminus U_{\delta}.
\end{cases}
\end{align}
Notice in $U_{\delta/2}$,
\begin{align*} 
\wtd = dt^2 + \eta_0,
\end{align*}
so that $\wtd$ is a product metric in a small neighborhood of $\partial B^8$.  By (\ref{split}), we can identify $\eta$ and $\wtd$ in a neighborhood of $S^7$ to define a smooth metric on $N^8 = Y^8 \cup B^8$, which we will also denote by $\wtd$.

It follows from (\ref{gddef}) and (\ref{Dpsi}) that the second derivatives of $\wtd$ are bounded, independent of $\delta$.  Therefore, we can take a subsequence $\delta_i \rightarrow 0$ and the metrics $\wti = \widetilde{g}_{\delta_i}$ will converge in $C^{1,\alpha}$, for some fixed $\alpha \in (0,1)$, to the $C^1$-metric $g_0$ in (\ref{g0}).  Also, if $\Ri$ denotes the scalar curvature with respect to $\wti$ then
\begin{align} \label{Rd2}
| \Ri | \leq C
\end{align}
for some $C$ (independent of $i$).  Of course, we also have
\begin{align*} 
\Ri = \begin{cases} R_{\eta} \ \ \mbox{ on } Y^8, \\
R_{\wtg} \ \ \mbox{ on } B^8 \setminus U_{\delta_i}.
\end{cases}
\end{align*}

Let
\begin{align} \label{Li}
L_i = - \frac{14}{3} \Delta_{\wti} + \Ri
\end{align}
denote the conformal laplacian on $(N^8, \wti)$.  Let $\lambda_i$ denote the principal eigenvalue of $L_i$ and $u_i > 0$ the first eigenfunction, normalized to have unit $L^2$-norm:
\begin{align} \label{uidef} \begin{split}
L_i u_i &= \lambda_i u_i, \\
\int_{N^8} u_i^2 \ & \di = 1.
\end{split}
\end{align}
If we let $u_i^0$ be the constant function normalized so that
\begin{align*}
\int_{N^8} (u_i^0)^2 \di = 1,
\end{align*}
then for some constant $b_0 > 0$ we have
\begin{align*}
b_0^{-1} \leq u_i^0 \leq b_0.
\end{align*}
Also,
\begin{align*}
\lambda_i \leq \int_{N^8} u_i^0 L_i u_i^0 \ \di = (u_i^0)^2 \int_{N^8} \Ri \di \leq C,
\end{align*}
hence the sequence $\{ \lambda_i \}$ is bounded above.

Next, we will prove that $\{\lambda_i \}$ has a positive, uniform lower bound for $i$ large.  If we write the eigenvalue equation for $u_i$ in local coordinates, then $u_i$ satisfies a second order elliptic equation of the form
\begin{align*}
a^{k \ell} \partial_k \partial_{\ell} u_i + b^k \partial_k u_i + c u_i = 0.
\end{align*}
Since $\{ \wti \}$ converges in $C^{1,\alpha}$ and the curvature of $\wti$ is uniformly bounded, it follows that $\{ a^{k  \ell}\}$ is uniformly elliptic, the coefficients $a^{k \ell}$ and $b^k$ are bounded in $C^{\alpha}$, and $c$ is bounded in $L^{\infty}$.  By standard elliptic estimates $\{ u_i \}$ is bounded in $W^{2,p}$, for any $p >> 1$, with respect to some fixed background metric.  We can therefore take a subsequence (still denoted by $\{ u_i \}$) which converges in $C^{1,\gamma}$, for some $\gamma \in (0,1)$.   By decreasing $\alpha$ or $\gamma$ if necessary, we may assume $\alpha = \gamma$.

By (\ref{uidef}), we have
\begin{align*}
\lambda_i = \int_{N^8} u_i L_i u_i \di
= \frac{14}{3} \int_{N^8} |\nabla_{\wti} u_i |^2 \di +  \int_{N^8} \Ri u_i^2 \di.\end{align*}
We write
\begin{align} \label{ev1} \lambda_i=I + II,\end{align}
where
\begin{align*}
I&=\frac{14}{3} \int_{Y^8} |\nabla_{\wti} u_i |^2 \di +  \int_{Y^8} \Ri u_i^2 \di,\\
II&=\frac{14}{3} \int_{B^8} |\nabla_{\wti} u_i |^2 \di +  \int_{B^8} \Ri u_i^2 \di.
\end{align*}
We first estimate $I$. Since $\widetilde{g}_i=\eta$ and
$R_{\eta} \geq \rho_0 > 0$ in $Y^8$, we can easily estimate
\begin{align} \label{I}
I =\frac{14}{3} \int_{Y^8} |\nabla_{\eta} u_i |^2 dV_{\eta} +  \int_{Y^8} R_{\eta} u_i^2 dV_{\eta}
\geq \rho_0 \int_{Y^8}  u_i^2 dV_{\eta}.
\end{align}
To estimate $II$, we will split the integral over $B^8$ into two parts: an integral over a collar neighborhood of the boundary, and an integral over the complement.  The key point is that on the former set, the integrals will be small while on the latter set $\wti$ is conformal to $\overline{g}$.

It will simplify our estimates if we define the collar neighborhoods in terms of $\overline{g}$.  Since $\widetilde{g} = v^{2/3}\overline{g}$ on $B^8$, it follows that distances measured between points in $\overline{B^8}$ with respect to $\widetilde{g}$ and $\overline{g}$ are comparable.  In particular, if $p,q \in \overline{B^8}$, then
\begin{align*} 
c_1^{-1} d_{\overline{g}}(p,q) \leq d_{\widetilde{g}}(p,q) \leq c_1 d_{\overline{g}}(p,q),
\end{align*}
for some $c_1 > 0$.  Therefore, if we define the collar neighborhoods
\begin{align*} 
V_i = \{ p \in B^8\ :\ d_{\overline{g}}(p,\partial B^8) < c_1 \delta_i \},
\end{align*}
then
\begin{align*}  
U_{\delta_i} \subset V_i.
\end{align*}
In particular, on $B^8 \setminus V_i$, $\wti = \widetilde{g}$.

Returning to our estimate of $II$ in (\ref{ev1}), we write
\begin{align} \label{ev2}
II = II_1 + II_2,
\end{align}
where
\begin{align*}II_1&=\frac{14}{3} \int_{V_i} |\nabla_{\wti} u_i |^2 \di +  \int_{V_i} \Ri u_i^2 \di,  \\
II_2 &= \frac{14}{3} \int_{B^8 \setminus V_i} |\nabla_{\wti} u_i |^2 \di +  \int_{B^8 \setminus V_i} \Ri u_i^2 \di.
\end{align*}
By (\ref{Rd2}), the fact that $\{ \wti \}$ and $\{ u_i \}$ converge in $C^{1,\alpha}$, and
\begin{align} \label{volU}
Vol_{\wti}(V_i) \leq C \delta_i,
\end{align}
we have
\begin{align} \label{ev3} \begin{split}
II_1 &= \frac{14}{3} \int_{V_i} |\nabla_{\wti} u_i |^2 \di +  \int_{V_i} \Ri u_i^2 \di \\
&\geq  \int_{V_i} \Ri u_i^2 \di \\
&\geq - C \int_{V_i} u_i^2 \di \\
&\geq - C \delta_i.
\end{split}
\end{align}
To estimate $II_2$, we use the fact we observed above when defining $V_i$; i.e., on $B^8 \setminus V_i$, $\wti = \wtg$.
Therefore,
\begin{align} \label{ev4}
II_2 &= \frac{14}{3} \int_{B^8 \setminus V_i} |\nabla_{\wtg} u_i |^2 dV_{\wtg} +  \int_{B^8 \setminus V_i} R_{\wtg} u_i^2 dV_{\wtg}.
\end{align}
Since $\wtg = v^{4/(n-2)} \overline{g}$,
\begin{align*}
R_{\wtg} &= v^{-5/3} \big\{ -\frac{14}{3} \Delta_{\overline{g}} v + R_{\overline{g}} v \big\}, \\
dV_{\wtg} &= v^{8/3}dV_{\overline{g}}, \\
|\nabla_{\wtg} u_i|^2 &= |\nabla_{\overline{g}} u_i|^2 v^{-2/3}.
\end{align*}
Substituting these into (\ref{ev4}) and integrating by parts gives
\begin{align*} 
II_2 &= \frac{14}{3} \int_{B^8 \setminus V_i} |\nabla_{\overline{g}} u_i |^2 v^2 dV_{\overline{g}}  +  \int_{B^8 \setminus V_i} \big\{ -\frac{14}{3} v \Delta_{\overline{g}} v + R_{\overline{g}} v^2 \big\} u_i^2 dV_{\overline{g}} \\
&= \frac{14}{3} \int_{B^8 \setminus V_i} |\nabla_{\overline{g}} u_i |^2 v^2 dV_{\overline{g}} +  \frac{14}{3} \int_{B^8 \setminus V_i} |\nabla_{\overline{g}} v|^2 u_i^2 dV_{\overline{g}} \\
 & \hskip.25in + 2 \big( \frac{14}{3}\big) \int_{B^8 \setminus V_i} \langle \nabla_{\overline{g}} v, \nabla_{\overline{g}} u_i \rangle_{\overline{g}} v u_i  dV_{\overline{g}} + \int_{B^8 \setminus V_i} R_{\overline{g}} v^2  u_i^2 dV_{\overline{g}} \\
 & \hskip.5in - \frac{14}{3} \oint_{\Sigma_i } u_i^2 v ( N_{\overline{g}}v)  dS_{\overline{g}} \\
 &= \frac{14}{3} \int_{B^8 \setminus V_i} |\nabla_{\overline{g}} (u_i v) |^2 dV_{\overline{g}}  + \int_{B^8 \setminus V_i} R_{\overline{g}} v^2  u_i^2 dV_{\overline{g}} \\
 & \hskip.5in - \frac{14}{3} \oint_{\Sigma_i } u_i^2 v ( N_{\overline{g}}v)  dS_{\overline{g}} \\
 &\geq \int_{B^8 \setminus V_i} R_{\overline{g}} v^2  u_i^2 dV_{\overline{g}} - \frac{14}{3} \oint_{\Sigma_i } u_i^2 v ( N_{\overline{g}}v)  dS_{\overline{g}},
\end{align*}
where $\Sigma_i = \partial (B^8 \setminus V_i)$, $N_{\overline{g}}$ is the unit outward normal, and $dS_{\overline{g}}$ the boundary measure on $\Sigma_i$ with respect to $\overline{g}$.
Recall that near $\partial B^8$, $v$ is defined to be constant on normal geodesics.  Since $\Sigma_i$ is the set of points of distance $c_1 \delta_i$ from the boundary, it follows that $N_{\overline{g}} v = 0$ on $\Sigma_i$.  Therefore, the boundary integral vanishes, and
\begin{align*} 
II_2 \geq  \int_{B^8 \setminus V_i} R_{\overline{g}} v^2  u_i^2 dV_{\overline{g}}.
\end{align*}
Using the facts that $v \geq v_0 > 0$ on $B^8$ for some constant $v_0$, $R_{\overline{g}}$ is constant, and $u_i$ converges in $C^{1,\alpha}$, we conclude
\begin{align} \label{ev7-2} \begin{split}
II_2 &\geq  R_{\overline{g}} v_0^2\int_{B^8 \setminus V_i}  u_i^2 dV_{\overline{g}} \\
&= R_{\overline{g}} v_0^2\int_{B^8} u_i^2 dV_{\overline{g}} - R_{\overline{g}} v_0^2\int_{V_i} u_i^2 dV_{\overline{g}} \\
&\geq R_{\overline{g}} v_0^2\int_{B^8} u_i^2 dV_{\overline{g}} - C \delta_i,
\end{split}
\end{align}
where the last line follows from (\ref{volU}). 
Combining 
(\ref{ev3}) and (\ref{ev7-2}), we obtain
\begin{align} \label{II}
II \geq  R_{\overline{g}} v_0^2\int_{B^8} u_i^2 dV_{\overline{g}}  - C \delta_i.
\end{align}

It follows from (\ref{I}) and (\ref{II}) that
\begin{align*}
\lambda_i \geq \rho_0 \int_{Y^8}  u_i^2 dV_{\eta} + R_{\overline{g}} v_0^2\int_{B^8} u_i^2 dV_{\overline{g}}  - C \delta_i.
\end{align*}
Since $\{ u_i \}$ converges in $C^{1,\alpha}$ to a non-zero function, we see that $\lambda_i$ has a positive, uniform lower bound for large $i$.  In particular, this implies that $\wti$ is conformal to a metric of positive scalar curvature, a contradiction.
\end{proof}

\begin{remark}  There is a construction of PSC metrics on spheres in dimensions $4k -1$, for all $k \geq 2$, which cannot be extended to metrics of PSC in the ball (see Section 2 of \cite{Rosenberg} for an outline).  Therefore, a result analogous to Theorem \ref{Cor2} should hold for spheres of these dimensions as well.  \end{remark}

%
%
%

\end{document}